\title{Drawing strategies in Strong Ramsey games for 3-uniform hypergraphs}
\author[N. Bowler]{Nathan Bowler}
\author[H. Ortm{\"u}ller]{Henri Ortm{\"u}ller}
\email{nathan.bowler@uni-hamburg.de, henri.ortmueller@unifr.ch}
\date{\today}
\setlist[enumerate]{label=(\roman*)}
\newcommand{\R}{\mathcal{R}} %
\newtheorem{theorem}{Theorem}[section] 
\newtheorem{lemma}[theorem]{Lemma}
\newtheorem{corollary}[theorem]{Corollary}
\theoremstyle{definition}
\newtheorem{definition}[theorem]{Definition}
\newtheorem{question}[theorem]{Open problem}
\newtcolorbox{claimproof}{%
    enhanced,
    colback=white,
    colframe=white,
    left=5pt,
    top=5pt,
    right=5pt,
    bottom=5pt,
    sharp corners,
    breakable,
    overlay first={\draw[thick] 
        ([yshift=-2em]frame.north west) |- ([xshift=2em]frame.north west)
        ;},
    overlay last={\draw[thick] 
        ([yshift=2em]frame.south east) |- ([xshift=-2em]frame.south east)
        ;},
    overlay unbroken={\draw[thick] 
        ([yshift=-2em]frame.north west) |- ([xshift=2em]frame.north west)
        ([yshift=2em]frame.south east) |- ([xshift=-2em]frame.south east)
        ;}
}
\begin{document}

\begin{abstract}
    The \emph{Strong Ramsey game} $\mathcal{R}(B,G)$ is a two player game with players $P_1$ and $P_2$, where $B$ and $G$ are $k$-uniform hypergraphs for some $k \geq 2$. $G$ is always finite, while $B$ may be infinite. $P_1$ and $P_2$ alternately color uncolored edges $e \in B$ in their respective color and $P_1$ begins. Whoever completes a monochromatic copy of $G$ in their own color first, wins the game. If no one claims a monochromatic copy of $G$ in a finite number of moves, the game is declared a draw. In this paper, we give an infinite family of 3-uniform hypergraphs $\{G_t\}_{t \geq 3}$, such that $P_2$ has a drawing strategy in the Strong Ramsey game $\mathcal{R}(K_{\aleph_0}^{(3)}, G_t)$. This improves a result by David, Hartarsky and Tiba \cite{MR4073378}.
\end{abstract}

\maketitle

\section{Introduction}
One of the first people to study the Strong Ramsey game was Beck. In \cite{BeckRamseyGames2002}, he raised the question, whether $P_1$ has a winning strategy in $\R(K_{\aleph_0}, K_k)$ for all $k \geq 4$ and $\aleph_0$ being the smallest infinite cardinal. For $k=4$, his paper contains a winning strategy for $P_1$, which later turned out to be incomplete. Bowler and Gut \cite{bowler2023k4game} fixed the argument by using a computer algorithm. Note that $P_2$ does not have a winning strategy in $\R(B,G)$ for any graphs $B,G$, due to a simple strategy stealing trick (see \cite[Theorem 1.3.1]{MR3524719}).

In the meantime, Hefetz, Kusch, Narins, Pokrovskiy, Requil\'e and Sarid \cite{MR3645576} constructed a $5$-uniform hypergraph $G^*$ for which they proved that $\R(K^{(5)}_{\aleph_0},G^*)$ is a draw. Moreover, they suggested to tackle the following problem as an intermediate step towards Beck's question:
\begin{question}
    Does there exist a graph $G$, such that $\R(K_{\aleph_0},G)$ is a draw?
\end{question}
For $l \geq 3$, let $\hat{K}_{2,l}$ be the graph obtained from $K_{2,l}$ together with the edge connecting the two vertices of degree $l$. For a graph $G$, let $G^{(k)}$ denote the $k$-uniform hypergraph, which can be obtained by adding the same fixed set of $k-2$ vertices to every edge in $E(G)$.
In 2020, David, Hartarsky and Tiba \cite{MR4073378} improved the above result by proving that $P_2$ has a drawing strategy in $\R(K_{\aleph_0}^{(4)},\hat{K}_{2,4}^{(4)})$. In \Cref{sec:K24+} of this paper, we improve their result even further. Formally, we show that $P_2$ has a drawing strategy in $\R(K_{\aleph_0}^{(3)},\hat{K}_{2,4}^{(3)})$, thereby giving the first example of a 3-uniform hypergraph for which the corresponding Strong Ramsey game is a draw. This also supports their conjecture that $\R(K_{\aleph_0},\hat{K}_{2,4})$ is a draw. Moreover, we proved in \cite{bowlerOrt2025} that, if $\R(K_{\aleph_0},\hat{K}_{2,4})$ were a draw, it would be a minimal example of this type.

For $s \geq 0$, $t \geq 3$, let $K_{2,t}(s)$ be the graph obtained by identifying the center of an $s$-star with one of the vertices of degree $t$ of $K_{2,t}$. Recently, Ai, Gao, Xu and Yan \cite{ai2025strongramseygameboards} proved that $P_2$ has a drawing strategy in the Strong Ramsey game played on two disjoint boards $\R(K_{\aleph_0} \sqcup K_{\aleph_0}, K_{2,t+1}(t-2))$ for all $t \geq 3$.
Building on their work, we prove in \Cref{sec:K2t} that $\R(K^{(3)}_{\aleph_0}, K^{(3)}_{2,t+1}(t-2))$ is a draw for all $t \geq 3$. This suggests that the graphs $K_{2,t+1}(t-2)$ are promising candidates for a draw in the game $\R(K_{\aleph_0}, K_{2,t+1}(t-2))$.

\begin{figure}[H]
    \centering
    \begin{subfigure}[t]{0.23\textwidth}
        \centering
        \includegraphics[height = 28mm, keepaspectratio]{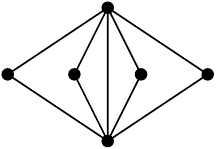}
        \caption{The graph $\hat{K}_{2,4}$}
    \end{subfigure}
    \hspace{0.15\textwidth}
    \begin{subfigure}[t]{0.23\textwidth}
        \centering
        \includegraphics[height= 28mm, keepaspectratio]{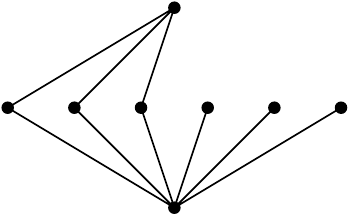}
        \caption{The graph $K_{2,3}(3)$}
    \end{subfigure}
    \caption{}
\end{figure}

\section{Notation, Definitions and Basic Results}
All (hyper)graphs considered in this paper are undirected and simple. Hence, we denote $k$-(hyper-)edges as $x_1x_2 \ldots x_k$ instead of $\{x_1, x_2, \ldots, x_k \}$ for clarity of the presentation.
In the Strong Ramsey game $\R(B,G)$, we will call $B$ the \textit{board} and $G$ the \textit{target graph}. We refer to $P_1$ as \textit{she} and $P_2$ as \textit{he} and we will often say that a player claims, picks or takes an edge instead of colors it in their respective color. For brevity, we sometimes refer to an edge claimed by $P_1$ or $P_2$ as $P_1$-edge or $P_2$-edge, respectively. We use a similar notion for graphs which a player has claimed. We might also say that a player has or has built a graph $H$, meaning that the player has claimed all edges of a copy of $H \subseteq B$. Moreover, we say a vertex $x \in B$ is fresh, if no player has claimed an edge incident to $x$.
For a given target graph $G$, we say that $P_1$ has a \textit{threat} or $P_1$ \textit{threatens} $P_2$, if $P_1$ has claimed a copy $H \subseteq B$ of $G-e$ for an edge $e \in E(G)$, such that the corresponding $\hat{e} \in B$ is not yet claimed by either player, and similarly for $P_2$. We will call $H$ a \textit{threat graph} in that case.
In order to be concise, we will not give a formal definition of \emph{strategy} here. Instead, we refer the reader to \cite[Appendix C]{CombGamesBeck}. We define $[k] \coloneqq \{1, \dots, k\}$.

We call the vertices added in order to obtain $G^{(k)}$ from $G$ the \emph{centers} of $G^{(k)}$.
On the other hand, in the case of $G = K_{\aleph_0}$ and $k=3$, given a vertex $x \in V(K_{\aleph_0}^{(3)})$, observe that the set $\{xyz \ | \ y,z \in V(K_{\aleph_0}^{(3)})\}$ spans $E(K_{\aleph_0})$. In this case, we will also refer to $x$ as \textit{center} and we will call $\{yz \ | \ xyz \in E(K_{\aleph_0}^{(3)})\}$ the $x$-\textit{board}. In both $K_{2,t}(s)$ and $\hat{K}_{2,l}$, all vertices of degree at least 3 will be called \emph{main vertices} and all vertices of degree at most 2, \emph{minor vertices}. Moreover in $\hat{K}_{2,l}$, the edge connecting the two vertices of degree $l+1$ is called the \textit{main edge}.

In the course of this paper, it will be convenient to use time stamps in order to clarify which stage of the game we are talking about. Therefore, for a fixed point in time $T$,
we define $E_T(P_1)$ as the set of edges and $e_T(P_1)$ as the number of edges that $P_1$ has claimed at $T$. For vertices $v,w \in B$, we let $d_T^{P_1}(v)$ be the number of edges from $E_T(P_1)$, which are incident to $v$.

In the proof of \Cref{thm:K24+} and \Cref{thm:K2t^3}, $P_2$ must react appropriately in order to prevent $P_1$ from claiming a copy of $G$. It will turn out to be helpful to distinguish cases based on how much progress $P_1$ has made toward claiming such a copy of $G$. For this, we will introduce the following \emph{progress bar}.
\begin{definition}
    \label{def:eTP1(G)}
    For a given target graph $G^*$ and board $B$ at a given point in time $T$, we define
    \begin{equation*}
        e_T^{P_1}(G^*) = \max_{\mathclap{\substack{H \subseteq B \\ H \cong G^* \\ E(H) \cap E_T(P_2) = \emptyset}}}\{|E_T(P_1) \cap E(H)|\}.
    \end{equation*}
\end{definition}
From the point in time $T$ onward, notice that it takes $P_1$ at least $e(G^*) - e_T^{P_1}(G^*)$ moves to claim a copy of $G^*$. In particular, she has a threat at $T$ if and only if $e(G^*) - e_T^{P_1}(G^*) = 1$. All the expressions above are defined similarly for $P_2$, and we might omit $T$ in case it is clear from the context.

In this paper, we are going to use a particular kind of figure to depict some board states in order to give the reader a visual intuition. In these figures, we will use the color violet only for edges of $P_1$ and blue only for edges of $P_2$. Moreover, all figures in this paper depict a board state after a move of $P_1$.
Since drawing figures of 3-uniform hypergraphs is often a nightmare, we will only depict $x$-boards for specific $x \in B$, which we will indicate at the top of the figure. Hence, all the edges that are drawn as $2$-uniform edges correspond to 3-uniform edges containing the center of the corresponding board.
In case, $P_1$ has claimed an edge $z_1z_2z_3$ not containing $x$, but the $x$-board is depicted in the figure, we will denote this by $+z_1z_2z_3$ in the right corner of the figure. We will also use this notation, if $P_1$ has claimed an edge $e^*$ with special properties. Say that $P_1$ has claimed $k$ additional edges for which we do not have any further information. We will denote this by $+k$. Note that, we will not indicate, if edges of $P_2$ are missing in the figure.
Oftentimes, we add a label $T$ to the top left corner of a figure in order to indicate which point in time it depicts.

For example in \Cref{pic:K24+:T1}, the $a$-board is depicted at $T_1$, $P_1$ has claimed the edges $z_1z_2z_3$, $e^*$ with special properties and four additional edges that we do not have any information about whereas $P_2$ has claimed $abv_1, abv_2, abv_3, abv_4, abv_5$ so far.

\section{\texorpdfstring{$\R(K_{\aleph_0}^{(3)}, \hat{K}_{2,4}^{(3)})$}{The 3-uniform K24-game} is a draw}
\label{sec:K24+}

In this section, all we do is proving that $\R(K_{\aleph_0}^{(3)}, \hat{K}_{2,4}^{(3)})$ is a draw. For convenience, we let $G = \hat{K}_{2,4}^{(3)}$ in this section and we will refer to $G^-$ as the subgraph of $G$ with 8 edges, that is not the $K_{2,4}^{(3)}$.

\begin{theorem}
    \label{thm:K24+}
    $\R(K_{\aleph_0}^{(3)}, G)$ is a draw.
\end{theorem}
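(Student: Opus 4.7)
The plan is to exhibit an explicit strategy for $P_2$ and argue via a careful case analysis that $P_1$ can never drive her progress bar $e_T^{P_1}(G)$ to $9$. The central idea is that $P_2$ concentrates his auxiliary structure on one freshly chosen center $c_0$, builds up a growing family of latent threats on the $c_0$-board, and uses them to force $P_1$ either to block on the $c_0$-board or else to expose an unblocked $G - e$ of her own. Because $K_{\aleph_0}^{(3)}$ supplies an endless reservoir of untouched vertices, $P_2$ can always pick fresh minors and, if necessary, switch to a fresh center without losing tempo in the long run.

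More concretely, $P_2$'s opening picks fresh vertices $a, b, c_0$ disjoint from $P_1$'s first edge and plays the main edge $abc_0$. On each subsequent turn, if $P_1$ has a threat (that is, $e_T^{P_1}(G) = 8$) then $P_2$ plays the unique completing edge to block it; otherwise $P_2$ extends the structure on the $c_0$-board towards a $\hat{K}_{2,t}$ by claiming a pair $av_i c_0, b v_i c_0$ (one per turn, interleaved with any forced defence) with $v_i$ fresh. Once $P_2$ has a $\hat{K}_{2,3}$ on the $c_0$-board, every additional $a v_{t+1} c_0$ that $P_2$ claims creates a new threat (missing $b v_{t+1} c_0$), so $P_1$ is compelled to respond on the $c_0$-board. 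Crucially $P_2$ must also stop short of ever closing up a full $\hat{K}_{2,4}$ on the $c_0$-board himself, in accordance with the strategy-stealing principle.

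Two invariants are essential to maintain. First, $e_T^{P_2}(G) \leq 8$ at all times, so $P_2$ never accidentally completes $G$. Second, whenever $P_1$ threatens at time $T$, the unique blocking hyperedge $\hat{e}$ is available, and playing it does not itself complete a copy of $G$ for $P_1$ on some other board through the three-fold incidence inherent to 3-uniform edges. Together these invariants prevent $e_T^{P_1}(G)$ from ever reaching $9$ while keeping $P_2$'s own progress in check.

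The hard part will be analysing the case where $P_1$ turns $P_2$'s attempted threat bank against him: she may play on the $c_0$-board (possibly using $c_0$ as her own intended center) or claim edges touching $a$ or $b$ in a way that contaminates a large portion of the bank at once. In this case $P_2$ will need to abandon $c_0$ and rebuild on a fresh center $c_1$ while carrying over as much of the old structure as possible, for instance by also refreshing one of the main vertices. Showing that $P_2$ has enough tempo for this switchover, and that he can choose the new parameters so the structure he rebuilds on $c_1$ is never simultaneously useful to $P_1$, is the crux of the argument. I expect the proof to proceed by a detailed time-stamped case split based on how $P_1$'s most recent edges intersect $\{a, b, c_0\} \cup \{v_i\}_{i}$, in the style of figure-driven analysis foreshadowed in Section~2.
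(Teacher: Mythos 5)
Your endgame mechanism is the right one: once $P_2$ owns a $\hat{K}_{2,3}^{(3)}$ with center $c$ and main vertices $x,y$, playing $cya_i$ for fresh $a_i$ forces $P_1$ to answer $cxa_i$ forever, and this is exactly the paper's Distraction Lemma. But everything that makes the theorem true is in the part you defer. First, ``if $P_1$ has a threat then $P_2$ plays the unique completing edge'' is not a well-defined instruction: the danger in a strong game is that $P_1$ reaches $e^{P_1}(G)=8$ via two different threat graphs with two different completing edges (a double threat), or that the forced block itself creates a fresh threat for her. Ruling this out is the entire content of the paper's proof --- the $F_a,F_b$ dichotomy after $P_1$'s sixth move, the careful choice of $c$ and of the edges $cv_iv_j$ minimising/maximising $P_1$-degrees, and Cases 1.1--1.4 showing that whenever $P_1$ does reach $8$ her threat is single, blockable, and blocking resets her progress to at most $7$. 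You explicitly label this ``the hard part'' and ``the crux'' and then do not do it, so what you have is a plan rather than a proof. Second, your fallback of abandoning $c_0$ and rebuilding on a fresh center $c_1$ is not shown to work and is probably unworkable: rebuilding a $\hat{K}_{2,3}^{(3)}$ costs about seven tempi during which $P_1$ plays unopposed toward a $9$-edge target, and nothing prevents her from forcing infinitely many switchovers or simply finishing. The paper never switches centers; it chooses the opening so that no switch is needed.

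Two smaller but real errors. Your invocation of strategy stealing to argue that $P_2$ ``must stop short of ever closing up a full $\hat{K}_{2,4}$'' is a misconception: strategy stealing only says $P_2$ has no \emph{winning strategy}, not that he must avoid winning in a particular line. In the distraction phase $P_2$'s threats are genuine --- he completes $G$ and wins outright if $P_1$ ever fails to block --- and the invariant $e^{P_2}_T(G)\leq 8$ is something $P_1$ enforces, not something $P_2$ arranges. Finally, your opening is slower than it needs to be: playing the main edge $abc_0$ and then pairs $av_ic_0, bv_ic_0$ costs two moves per latent threat with the center fixed in advance, whereas the paper's five edges $abv_i$ form a star that serves whichever of $a,b$ is later chosen as center, and that deferred choice (based on $|F_a|,|F_b|$ and on $e^*$) is precisely what lets $P_2$ win the tempo race against a first player who needs only nine edges.
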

In order to prove a draw in a Strong Ramsey game, from a certain board state onward, $P_2$'s strategy has to distract $P_1$ constantly from claiming a copy of $G$. This is only possible if $P_2$ threatens to claim a copy of $G$ himself. In the following lemma, we present a type of board state for which $P_2$ can make use of such a distraction strategy in order to force at least a draw. The idea of distracting $P_1$ was first used in \cite{MR3645576} and it is essential for the proof of \Cref{thm:K24+}, since we only have to check the conditions required by this lemma, instead of playing the game for a countable number of moves, in order to guarantee a draw for $P_2$.
Note that in the proof of \Cref{thm:K24+}, we always make use of the Distraction Lemma (\ref{lem:K24+:EndPosition}), whenever we state that $P_2$ has a drawing strategy from a given board state onward.
\begin{lemma}[Distraction Lemma]
    \label{lem:K24+:EndPosition}
    Suppose $P_2$ has claimed a $\hat{K}_{2,3}^{(3)}$ with main vertices $x,y$ and center $c$ in the game $\R(K_{\aleph_0}^{(3)}, G)$, such that
    \begin{enumerate}
        \item  $P_1$ does not have a threat;
        \item  $P_1$ does not have a $\hat{K}_{2,3}^{(3)}$ with center $c$ and main vertex $x$;
        \item  $P_1$ does not have a $\hat{K}_{2,3}^{(3)}$ with center $x$ and main vertex $c$.
    \end{enumerate}
    If it is $P_2$'s turn, he has a drawing strategy.
\end{lemma}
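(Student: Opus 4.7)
The plan is to give $P_2$ a drawing strategy that creates a new winning threat on every one of his turns, using the hypothesized $\hat{K}_{2,3}^{(3)}$ as a launchpad. On each of his turns $P_2$ picks a fresh vertex $v$ (incident to no claimed edge) and claims $yvc$; together with the seven edges of his $\hat{K}_{2,3}^{(3)}$, these form $\hat{K}_{2,4}^{(3)}-xvc$ with main vertices $x,y$, minor vertices $v_1,v_2,v_3,v$, and center $c$. All of $P_2$'s near-complete edges contain $c$, and a short count (any copy of $G$ containing eight of these edges must be centered at $c$, since $x$ and $y$ each appear in at most five of them) shows that this is the only threat $P_2$ has created. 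So $P_1$ must answer with $xvc$ or lose on $P_2$'s next move; by (i) she has no threat before $P_2$'s move, hence cannot finish $G$ on her single reply, and a fresh $v$ is always available since the board is countably infinite.

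The heart of the argument is to show that after the forced exchange the three hypotheses of the lemma still hold for the same $\hat{K}_{2,3}^{(3)}$ of $P_2$ (which is still intact, since $P_2$ never loses edges), so that the strategy can be iterated. Every new $P_1$-structure must include her only new edge $xvc$, and the freshness of $v$, which forces $P_1$ to have exactly one edge through $v$, sharply limits the role that $v$ can play. For (i), a new $P_1$-threat lies in a copy of $G$ through $xvc$, hence has center in $\{x,v,c\}$. The case center $=v$ is impossible, since a threat demands at least four edges of $P_1$ through $v$. For center $=c$, $v$ must be a minor vertex of the copy (the main-vertex case again needs too many edges through $v$), and the missing edge must be $x''vc$ for the other main vertex $x''$ of the copy (otherwise $P_1$ would own a second edge through $v$); but then $P_1$ already held a $\hat{K}_{2,3}^{(3)}$ with center $c$ and main vertex $x$, contradicting (ii). The case center $=x$ is analogous and is ruled out by (iii). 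For (ii) and (iii), any newly acquired forbidden $\hat{K}_{2,3}^{(3)}$ likewise has to contain $xvc$, so $v$ would be a main or a minor vertex of it, and each possibility demands a second edge of $P_1$ through $v$, violating freshness.

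The main obstacle is this threat case analysis: it is exactly here that conditions (ii) and (iii) interlock with the freshness of $v$ to rule out the two ways in which the blocking edge $xvc$ could have generated a new threat for $P_1$. Once the invariants have been re-established after one exchange, a straightforward induction shows that $P_2$ can iterate the strategy forever; $P_1$ therefore never completes a copy of $G$, and $P_2$ achieves at least a draw.
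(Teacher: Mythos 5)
Your proof is correct and follows essentially the same route as the paper: $P_2$ repeatedly claims $cyv$ for fresh $v$, forcing $P_1$ to block with $cxv$, and the freshness of $v$ (so $d^{P_1}(v)=1$) forces $v$ to be a minor vertex of any resulting threat graph, whose remaining seven edges would form a $\hat{K}_{2,3}^{(3)}$ forbidden by (ii) or (iii). The only difference is presentational: you make the preservation of invariants (i)--(iii) after each exchange explicit as an induction, whereas the paper compresses this into ``the same argument as above''; both come to the same thing.
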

\begin{proof}
    Let $a_1$ be a fresh vertex. $P_2$ claims $cya_1$ and $P_1$ must block $P_2$'s threat by taking $cxa_1$, because she had no threat. If claiming $cxa_1$ provides her with a threat graph $H$, we have $cxa_1 \in E(H)$.
    Note that every edge of $H$ is incident to the center and one of the main vertices. Since $d^{P_1}(a_1) = 1$, it can neither be a center nor a main vertex of $H$. So $c$ and $x$ must be the center and one of the main vertices of $H$, but every threat graph with a vertex of degree 1 contains a $\hat{K}_{2,3}^{(3)}$, which was excluded by the assumptions (ii) and (iii).

    $P_2$ continues by claiming $cya_i$ for fresh vertices $a_i$ as long as the game does not end and he cannot win by claiming $cxa_i$. Assume for a contradiction that $P_1$ won the game eventually. Then, she must have obtained a threat graph $H$ by blocking with $cxa_i$. Using the same
    argument as above, $P_1$ must have claimed a copy of $\hat{K}_{2,3}^{(3)}$ of type (ii) or (iii), which was forbidden. Hence, $P_2$ has a drawing strategy.
\end{proof}

\begin{corollary}
    \label{cor:K24+:EndPosition}
    In the game $\R(K_{\aleph_0}^{(3)}, G)$, suppose $P_2$ has built a $\hat{K}_{2,3}^{(3)}$, such that $P_1$ has claimed at most 10 edges and she has no threat.
    If it is $P_2$'s turn, he has a drawing strategy.
\end{corollary}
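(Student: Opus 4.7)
The plan is to reduce the corollary to the Distraction Lemma (\Cref{lem:K24+:EndPosition}). Let $K$ denote the copy of $\hat{K}_{2,3}^{(3)}$ that $P_2$ has built, with main vertices $a, b$ and center $c$. Hypothesis (i) of the Distraction Lemma is immediate: it is exactly the assumption that $P_1$ has no threat. Since $P_2$ is free to designate either main vertex of $K$ as the distinguished vertex $x$ appearing in (ii) and (iii), it suffices to show that at least one of the two labelings $(x, y) = (a, b)$ or $(x, y) = (b, a)$ satisfies both remaining hypotheses of the lemma.

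Assume for contradiction that neither labeling works. Then for each $u \in \{a, b\}$, $P_1$ has claimed a copy $H_u$ of $\hat{K}_{2,3}^{(3)}$ which is either of \emph{type (ii)} with center $c$ and main vertex $u$, or of \emph{type (iii)} with center $u$ and main vertex $c$. This splits into four subcases according to the types of $H_a$ and $H_b$, and in each subcase I would derive $|E(H_a) \cup E(H_b)| \geq 11$, contradicting the assumption $e(P_1) \leq 10$.

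The key observation driving every overlap bound is that $P_2$ owns the edge $abc$, so this edge lies in neither $H_a$ nor $H_b$. In the three subcases where at least one of $H_a, H_b$ is of type (iii), tracing this forbidden-edge constraint through the vertex sets shows that $a \notin V(H_b)$ or $b \notin V(H_a)$ (whichever would be needed to force a shared edge): for example, if $H_b$ has center $b$, then $b \in V(H_a)$ would exhibit some edge of $H_a$ of the form $\{c, a, b\}$ or $\{c, a', b\}$, forcing $abc \in H_a$. Since every edge of the type-(iii) hypergraph contains its center, this rules out any shared edge, giving $E(H_a) \cap E(H_b) = \emptyset$ and $|E(H_a) \cup E(H_b)| = 14 > 10$.

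The main obstacle is the remaining subcase, where $H_a$ and $H_b$ are both of type (ii), so both have center $c$, with main vertex pairs $\{a, a'\}$ and $\{b, b'\}$ respectively. The same $abc$-argument first shows $b \notin V(H_a)$ and $a \notin V(H_b)$. A shared edge then contains $c$ together with one vertex from $\{a, a'\}$ and one from $\{b, b'\}$; since $a$ and $b$ are excluded, it must contain $a'$ and $b'$. Splitting on whether $a' = b'$ or not, one finds $|E(H_a) \cap E(H_b)| \leq 3$, with the three candidate shared edges being of the form $\{c, a', v\}$ where $v$ is a minor vertex of both hypergraphs when $a' = b'$, and only the single edge $\{c, a', b'\}$ when $a' \neq b'$. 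Therefore $|E(H_a) \cup E(H_b)| \geq 7 + 7 - 3 = 11$, again contradicting $e(P_1) \leq 10$, so the Distraction Lemma applies and $P_2$ has a drawing strategy.
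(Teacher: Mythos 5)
Your proof is correct and follows essentially the same route as the paper: both reduce to the Distraction Lemma via the observation that $abc\in E(P_2)$ forces any blocking copy for one labeling to avoid the other main vertex, and then count edges against the bound of $10$. The paper phrases the count as ``at least $7$ edges avoid $y$ and a copy at $y$ needs at least $4$ more, giving $11>10$,'' which is the same $7+7-3=11$ computation you carry out via the union of $H_a$ and $H_b$.
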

\begin{proof}
    Let $x,y$ be the main vertices and $c$ the center of the $\hat{K}_{2,3}^{(3)}$, which $P_2$ has built and assume that either (ii) or (iii) of the Distraction Lemma (\ref{lem:K24+:EndPosition}) does not hold.
    In this case, $P_1$ has claimed at least seven edges not incident to $y$, since $cxy \in E(P_2)$. $P_1$ cannot have claimed another $\hat{K}_{2,3}^{(3)}$ with either main vertex or center $y$, since this requires at least four more edges on the $y$-board. Hence, $P_1$ has not claimed a copy of $\hat{K}_{2,3}^{(3)}$ with center $c$ or $y$ and main vertex $y$ or $c$, respectively. If we exchange $x$ and $y$ in the Distraction Lemma (\ref{lem:K24+:EndPosition}), we obtain that $P_2$ can force at least a draw.
\end{proof}
With those auxiliary results, we are finally able to prove the main theorem of this section.

\begin{proof}[Proof of \Cref{thm:K24+}]
    $P_1$ claims an edge $z_1z_2z_3$. In his first five moves $P_2$ takes $abv_i$ for fresh vertices $a$, $b$, $v_i$ with $i \in [5]$. Let $e^*$ be the second edge of $P_1$ and let $T_1$ be the point in time after her sixth move. We obtain the board state in \Cref{pic:K24+:T1}.
    \begin{figure}[H]
        \centering
        \includegraphics[width=0.30\textwidth]{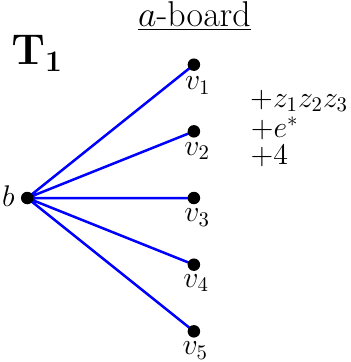}
        \caption{The $a$-board at $T_1$. Note that the $b$-board looks almost identical: We only need to exchange $a$ and $b$ in the figure.}
        \label{pic:K24+:T1}
    \end{figure}
    For a vertex $v \in V(B) \backslash  \{v_1, \dots, v_5\}$, define
    \begin{align*}
        F_v \coloneqq \{v_i \in \{v_1, \dots, v_5\} \ | \ \forall j \neq i \colon vv_iv_j \notin E_{T_1}(P_1)\}
    \end{align*}
    as set of vertices, which are not incident to a violet edge in the $K_5$ induced by $\{v_1, \dots, v_5\}$ on the $v$-board. At $T_1$, we are in one of the following two cases.

    \underline{Case 1}: $|F_a| \geq 2$ or $|F_b| \geq 2$.  \newline
    \underline{Case 2}: $P_1$ has claimed the edge set $\{z_1z_2z_3, e^*, av_{i_1}v_{i_2}, av_{i_3}v_{i_4}, bv_{j_1}v_{j_2}, bv_{j_3}v_{j_4}\}$ for pairwise different $i_1,i_2,i_3,i_4 \in [5]$ and $j_1,j_2,j_3,j_4 \in [5]$. \newline

    We first show that this case distinction is complete. Suppose $|F_a| \leq 1$ and $|F_b| \leq 1$. By choice of the $v_i$, $e^*$ is incident to at most one $v_i$. Since $P_1$ has only claimed six edges at $T_1$, the four edges of $P_1$ not specified yet must be of the form $bv_{j_1}v_{j_2}, bv_{j_3}v_{j_4}$ and $av_{i_1}v_{i_2}, av_{i_3}v_{i_4}$ in order to fulfill $|F_a| \leq 1$ and $|F_b| \leq 1$. Hence, we are \textit{Case 2}.

    We start with proving that $P_2$ can force at least a draw in the first case.

    \begin{proof}[Proof of Case 1]
        In this case, $P_2$ will build a threat graph; its center $c$ will be chosen as follows. Let $c \in \{a,b\}$ be the vertex at $T_1$, such that $|F_c| \geq 2$. If both $|F_a| \geq 2$ and $|F_b| \geq 2$, we choose $c$, such that either $c \notin e^*$ or both $a,b \in e^*$.
        We let $x \in \{a,b\}$ be the other vertex, which will become a main vertex of the threat graph $P_2$ builds.

        $P_2$ claims $cv_iv_j$, such that $i,j \in F_c$ and  $d^{P_1}_{T_1}(v_i), d^{P_1}_{T_1}(v_j)$ are minimal.
        Up to relabeling, suppose $P_2$ took $cv_4v_5$ in his sixth move. Moreover, we can assume up to relabeling that $cv_1v_5, cv_2v_5, cv_3v_5 \notin E(P_1)$ after the seventh move of $P_1$. In his seventh move, $P_2$ takes $cv_kv_5$ for $k \in [3]$, such that $d^{P_1}(v_k)$ is currently maximal.
        Again by relabeling, suppose $P_2$ has taken $cv_3v_5$. Let $T_2$ be the point in time after the eighth move of $P_1$ (see \Cref{pic:K24:T2}). Note that one of $cv_1v_5$ and $cv_2v_5$ is not claimed by $P_1$ at $T_2$.
        \begin{figure}[H]
            \centering
            \includegraphics[width=0.27\textwidth]{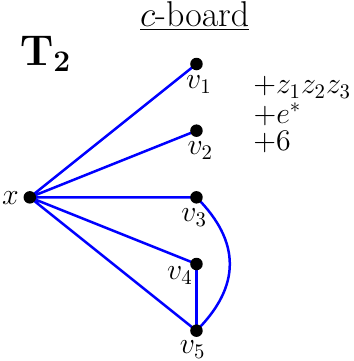}
            \caption{The $c$-board at $T_2$.}
            \label{pic:K24:T2}
        \end{figure}
        We distinguish between the following four cases.

        \underline{Case 1.1}: $e_{T_2}^{P_1}(G) \leq 6$.
        \begin{claimproof}
            \label{pf:K24+:Case1.1}
            By symmetry, assume that $cv_2v_5$ is not taken by $P_1$, so $P_2$ claims $cv_2v_5$ and therefore builds a $\hat{K}_{2,3}^{(3)}$. After her next move, $P_1$ cannot have a threat by assumption and since $P_1$ has claimed 9 edges so far, $P_2$ can force at least a draw by \Cref{cor:K24+:EndPosition}.
        \end{claimproof}
        \underline{Case 1.2}: $e_{T_2}^{P_1}(G) = 8$ and $P_1$ has built a copy of $K_{2,4}^{(3)}$.
        \begin{claimproof}
            For simplicity, let $H$ be the copy of $K_{2,4}^{(3)}$, which $P_1$ built. Since $z_1z_2z_3 \in E(H)$ and $e_{T_2}^{P_1}(G) = 8$, observe that both $cv_1v_5$ and $cv_2v_5$ are unclaimed at $T_2$. $P_2$ begins by blocking the threat of $P_1$. Immediately after his move, one can observe that $e^{P_1}(G) = 4$ (by checking every combination of centers and main vertices). So as in \emph{Case 1.1}, by symmetry $P_2$ picks $cv_2v_5$ in his next move, thereby claiming a $\hat{K}_{2,3}^{(3)}$. It is easy to see that $P_1$ cannot have a threat after her tenth move, since $e^{P_1}(G) = 4$ two moves ago. By \Cref{cor:K24+:EndPosition}, $P_2$ obtains a drawing strategy.
        \end{claimproof}

        \underline{Case 1.3}: $e_{T_2}^{P_1}(G) = 8$ and $P_1$ has built a copy of $G^-$.
        \begin{claimproof}
            Let $H$ be the copy of $G^-$ claimed by $P_1$. As in \emph{Case 1.2}, we have $z_1z_2z_3 \in E(H)$. So without loss of generality, let $z_1$ be the center of $H$, let $z_2$ be a main vertex and let $z^*$ be the other main vertex. Both $cv_1v_5$ and $cv_2v_5$ are not taken by either player. \newline

            $P_2$ blocks the threat of $P_1$. As long as $P_1$ creates threat graphs by claiming either $z_1z_2w_i$ or $z_1z^*w_i$ for vertices $w_i$, $P_2$ blocks by taking $z_1z^*w_i$ or $z_1z_2w_i$, respectively. Since the game ends in a draw, if she does not stop claiming edges of those type, eventually $P_1$ must claim an edge $\hat{e}$, which does not contain both $z_1$ and $z_2$ or both $z_1$ and $z^*$. Let $T_3$ be the point in time after $P_1$ has claimed $\hat{e}$. We obtain the board state in \Cref{pic:K24+:T3} (from a $P_1$ perspective), where the left blob represents the vertices $v$ for which $P_1$ picked $z_1z_2v$ and $P_2$ blocked accordingly and the right blob represents all $w$ for which $P_1$ claimed $z_1z^*w$ and $P_2$ blocked accordingly.
            \begin{figure}[H]
                \centering
                \includegraphics[width=0.32\textwidth]{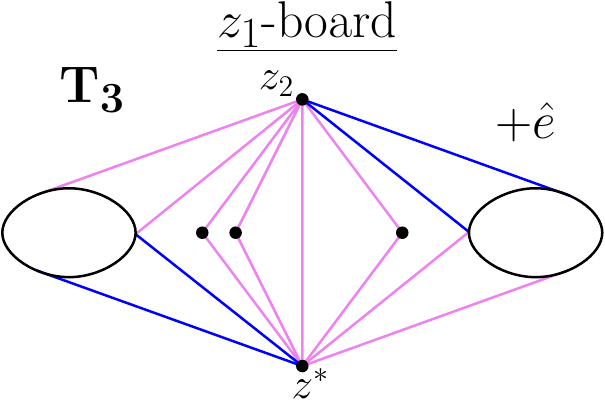}
                \caption{The $z_1$-board with all edges of $P_1$ after she took $\hat{e}$.}
                \label{pic:K24+:T3}
            \end{figure}
            We will now prove that $e_{T_3}^{P_1}(G) = 7$ and hence she has no threat. Assume for contradiction that after $P_1$ took $\hat{e}$, she has claimed a threat graph $H$. Then, $\hat{e} \in E(H)$. Additionally, the center and the main vertices of $H$ must have degree at least 4 and are therefore $z_1, z_2$ and $z^*$. If $z_1$ was the center, then $z_2$ and $z^*$ have to be the main vertices of $H$. In this case, $\hat{e}$ is incident to either both $z_1$ and $z_2$ or both $z_1$ and $z^*$, but those possibilities were excluded. $z_2$ cannot be the center, because the $z_2$-board contains at most a $P_1$-star centered at $z_1$ and potentially $\hat{e}$. Hence, there cannot be a threat with center $z_2$, and similarly for $z^*$. \newline

            $P_2$ claims $cv_2v_5$ up to relabeling vertices. This yields him a $\hat{K}_{2,3}^{(3)}$ with main vertices $x$ and $v_5$. We have either $\hat{e} = cv_1v_5$ or $\hat{e} \neq cv_1v_5$. In the latter case, $P_1$ must have taken $cv_1v_5$ in her next move after $T_3$, because he wins on the next move otherwise. \newline

            If $\hat{e} \neq cv_1v_5$, let $T_4$ be the point in time after $P_1$ took $cv_1v_5$. We check the conditions of the Distraction Lemma (\ref{lem:K24+:EndPosition}) at $T_4$.
            Suppose $P_1$ has claimed a copy of a threat graph $H$.
            Since she had no threat a move ago (at $T_3$), we must have $cv_1v_5 \in E(H)$. Since the only possible centers for $H$ are $z_1, z_2$ and $z^*$, by choice of $c, v_1, v_5$, $z^*$ must be the center of $H$. This is not possible as the $z^*$-board contains at most a $P_1$-star centered at $z_1$ and two additional edges of $P_1$.
            For conditions (ii) and (iii), observe that if $z^* \neq x$, then $x$ is contained in at most three edges of $P_1$. Hence, it can neither be a main vertex nor a center of a $\hat{K}_{2,3}^{(3)}$. If $z^* = x$, then $c$ and $v_5$ satisfy conditions (ii) and (iii) as the only possible centers for a $\hat{K}_{2,3}^{(3)}$ claimed by $P_1$ are $z_1, z_2$ and $z^*$. \newline

            If $\hat{e} = cv_1v_5$, $P_1$ can begin a second threat phase, if she picks $z_1z_2w$ or $z_1z^*w$ for a vertex $w$. $P_2$ blocks accordingly as before until $P_1$ selects an edge, which does not contain both $z_1$ and $z_2$ or both $z_1$ and $z^*$. This results in the board state from the last paragraph at $T_4$ up to the number of times, that $P_1$ threatened $P_2$ by taking $z_1z_2w$ or $z_1z^*w$, respectively. Since $P_2$ had a drawing strategy in this position, this completes \textit{Case 1.3}.
        \end{claimproof}
        \underline{Case 1.4}: $e_{T_2}^{P_1}(G) = 7$.
        \begin{claimproof}
            $P_2$ claims $cv_2v_5$ up to relabeling. Since $P_1$ had no threat, we know that $cv_1v_5 \in E(P_1)$ after her ninth move. If $e^{P_1}(G) \leq 7$, we get by \Cref{cor:K24+:EndPosition} that $P_2$ can force at least a draw. So suppose that $e^{P_1}(G) = 8$ after her ninth move and let $H$ be the threat graph of $P_1$. First of all, $P_2$ blocks the threat. Since $\{z_1, z_2, z_3\} \cap \{c, v_1, v_5\} = \emptyset$, only one of those edges is contained in $E(H)$.
            \newline

            If $z_1z_2z_3 \in E(H)$, we obtain a board state, which can be reached via \textit{Case 1.2} or \textit{Case 1.3} as follows. Suppose $H$ was a copy of $G^-$, then $P_1$ could have reached the same position by claiming $G^-$ in her first eight moves. After $P_2$ blocks her threat as in \textit{Case 1.3}, $P_1$ takes $cv_1v_5$ and $P_2$ takes $cv_2v_5$. Similarly for \textit{Case 1.2}, if $P_1$ built a copy of $K_{2,4}^{(3)}$ within her first eight moves instead.  \newline

            Assume for contradiction that $cv_1v_5 \in E(H)$.
            If $c$ was the center of $H$, then $d_{T_1}^{P_1}(c) = 5$. If $|F_x| \leq 1$, this is not possible as $P_1$ has taken $z_1z_2z_3$ and $xv_iv_j$, which are both not on the $c$-board.  Otherwise, $|F_x| \geq 2$ and recall that at the beginning of this proof, we have ensured that $e^* = cxw$ for a vertex $w$ in this case. We obtain $e^* \notin E(H)$, since either $v_1$ or $v_5$ has to be a main vertex of $H$, but both $cxv_1$ and $cxv_5$ are taken by $P_2$.

            If $v_5$ was the center of $H$, then $d_{T_1}^{P_1}(v_5) = 5$ by construction. In case $|F_c| < 5$, we had $cv_{k_1}v_{k_2} \in E_{T_1}(P_1)$ with $k_1 \neq 5 \neq k_2$ and therefore $d_{T_1}^{P_1}(v_5) < 5$. If otherwise $|F_c| = 5$ and $d_{T_1}^{P_1}(v_5) = 5$, by the pigeonhole principle there exist $i,j \in [4]$, such that $d_{T_1}^{P_1}(v_i),d_{T_1}^{P_1}(v_j) \leq 3$ contradicting the choice of $cv_4v_5$ and hence $v_5$ cannot be a center of $H$.

            If $v_1$ was the center of $H$, after $P_1$'s seventh move, we must have had $d^{P_1}(v_1) = 6$. Since every vertex of $G$ other than the center has degree less than five, this implies $d^{P_1}(v_2), d^{P_1}(v_3) \leq 5$, which contradicts the choice of $cv_3v_5$.
        \end{claimproof}
        This completes the proof of \textit{Case 1}.
    \end{proof}
    It will be easier to show that $P_2$ can achieve at least a draw in the second case as it is more specific.
    \begin{proof}[Proof of Case 2]
        Suppose, we are at $T_1$ and $P_1$ has claimed the edges listed above. It is easy to check that $e_{T_1}^{P_1}(G) \leq 3$. Up to relabeling vertices, we can assume that $P_1$ has claimed $av_1v_2$ and $av_3v_4$ as in \Cref{pic:K24+:T5}.
        \begin{figure}[H]
            \centering
            \includegraphics[width=0.25\textwidth]{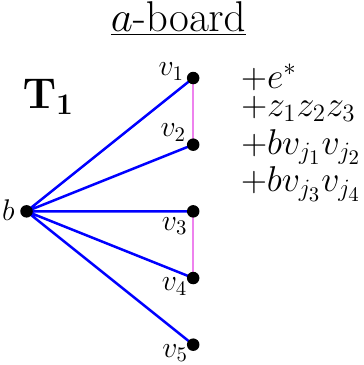}
            \caption{The $a$-board at $T_1$ in \textit{Case 2}.}
            \label{pic:K24+:T5}
        \end{figure}

        Without loss of generality, $P_2$ claims $av_4v_5$ and $av_1v_5$ in his next two moves. Thereafter, $P_2$ claims either $av_2v_5$ or $av_3v_5$, if possible (see \Cref{pic:K24+:Case2.1}) or both $av_2v_5$ and $av_3v_5$ were claimed by $P_1$. In the latter case, $P_2$ claims $av_1v_4$ and then $P_1$ did not claim both $av_1v_3$ and $av_2v_4$ after her next move. Up to symmetry, $P_2$ claims $av_2v_4$ in his next move to obtain a $\hat{K}_{2,3}^{(3)}$. This yields the board state in \Cref{pic:K24+:Case2.2}.
        \begin{figure}[H]
            \centering
            \begin{subfigure}[t]{0.33\textwidth}
                \centering
                \includegraphics[height= 40mm, keepaspectratio]{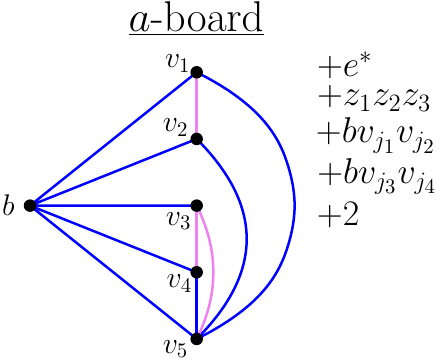}
                \caption{$P_1$ must have taken $av_3v_5$, because $P_2$ wins on the next move otherwise.}
                \label{pic:K24+:Case2.1}
            \end{subfigure}
            \hspace{0.15\textwidth}
            \begin{subfigure}[t]{0.33\textwidth}
                \centering
                \includegraphics[height= 40mm, keepaspectratio]{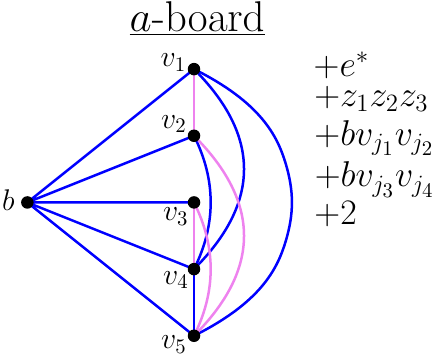}
                \caption{}
                \label{pic:K24+:Case2.2}
            \end{subfigure}
            \caption{}
        \end{figure}
        In \Cref{pic:K24+:Case2.1}, one can check that for all $x \in V(B)$, the $x$-board contains at most six edges of $P_1$ and therefore, we have $e^{P_1}(G) \leq 6$, so $P_2$ has a drawing strategy by \Cref{cor:K24+:EndPosition}.
        From comparing the edge sets $P_1$ has claimed in \Cref{pic:K24+:Case2.1} and \Cref{pic:K24+:Case2.2}, one can easily deduce that $e^{P_1}(G) \leq 7$ in \Cref{pic:K24+:Case2.2}. Hence by \Cref{cor:K24+:EndPosition}, $P_2$ has a drawing strategy in \textit{Case 2}.
    \end{proof}
    Since $P_2$ has a drawing strategy in \textit{Case 1} and \textit{Case 2} and those were all the cases that could occur, this completes the proof.
\end{proof}

\section{\texorpdfstring{$\R(K_{\aleph_0}^{(3)}, K^{(3)}_{2,t+1}(t-2))$}{The K2t-game} is a draw for all \texorpdfstring{$t \geq 3$}{t at least 3}}
\label{sec:K2t}
In this section, we are going to refine the main proof from Ai, Gao, Xu and Yan \cite{ai2025strongramseygameboards} in order to show that $\R(K^{(3)}_{\aleph_0}, K^{(3)}_{2,t+1}(t-2))$ is a draw for all $t \geq 3$. For convenience, we let $G_t = K^{(3)}_{2,t+1}(t-2)$ for the rest of this section. We begin by stating a lemma equivalent to the Distraction Lemma from \Cref{sec:K24+}. Due to their similarity, we will not prove it.

\begin{lemma}[Distraction Lemma]
    \label{lem:K2t:EndPosition}
    For $t \geq 3$, suppose $P_2$ has built a $K^{(3)}_{2,t}(t-2)$ with main vertices $x,y$ and center $c$ in the game $\R(K_{\aleph_0}^{(3)}, G_t)$, such that
    \begin{enumerate}
        \item  $P_1$ does not have a threat;
        \item  $P_1$ has not claimed a $K_{2,t}^{(3)}$ with center $c$ and main vertex $x$;
        \item  $P_1$ has not claimed a $K_{2,t}^{(3)}$ with center $x$ and main vertex $c$.
    \end{enumerate}
    If it is $P_2$'s turn, he has a drawing strategy. \qed
\end{lemma}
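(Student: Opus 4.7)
The plan is to mirror the proof of the earlier Distraction Lemma (\Cref{lem:K24+:EndPosition}). $P_2$ picks a fresh vertex $a_1$ and claims $cya_1$; together with the $K^{(3)}_{2,t}(t-2)$ he already owns, this creates a threat whose unique completion is $cxa_1$, the move that would attach $a_1$ as a new minor vertex to both main vertices $x$ and $y$. By assumption~(i), $P_1$ has no threat of her own and is forced to block by claiming $cxa_1$. $P_2$ then iterates with fresh vertices $a_2, a_3, \dots$, so the lemma reduces to checking that $P_1$'s block never creates a new threat.

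Suppose for contradiction that after $P_1$ plays $cxa_i$ she owns a threat graph $H$, that is, a copy of $G_t$ minus a single edge $e$, with $cxa_i \in E(H)$. Since $a_i$ is fresh, $d^{P_1}(a_i)=1$, so $a_i$ has degree exactly $1$ in $H$. The only vertices of $G_t = K^{(3)}_{2,t+1}(t-2)$ of degree at most $1$ are the $t-2$ pendant leaves of the attached star and the $t+1$ minor vertices of the $K_{2,t+1}$ part (whose degree drops from $2$ to $1$ precisely when $e$ is incident to them); for $t \geq 3$ every other vertex keeps degree well above $1$ in $H$. In either case the unique edge of $H$ through $a_i$ joins $a_i$ to the center $c_H$ of $H$ and to some main vertex $m_H$ of $H$, so $\{c,x\} = \{c_H, m_H\}$. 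Delete $a_i$ from $H$, together with (if $e$ lies in the $K_{2,t+1}$ part and is not incident to $a_i$) the minor vertex of $H$ at the other end of $e$: the remaining edges contain a $K^{(3)}_{2,t}$ with center $c_H$ and both main vertices of $H$, in particular a $K^{(3)}_{2,t}$ with center $c_H$ and main vertex $m_H$ in $E(P_1)$. According to whether $c = c_H$ (so $x = m_H$) or $x = c_H$ (so $c = m_H$), this contradicts assumption~(ii) or~(iii).

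The main obstacle relative to \Cref{lem:K24+:EndPosition} is that $G_t$ itself contains honest degree-$1$ vertices (the pendants of the appended star), so one cannot simply say ``the unique low-degree vertex of the threat graph must be a deprived minor vertex'' as in the $\hat{K}_{2,4}^{(3)}$ case; a small case split on whether $a_i$ plays a pendant or a minor role in $H$ is needed. Both cases produce the same $K^{(3)}_{2,t}$ substructure in $E(P_1)$, after which the contradiction with (ii) or (iii) closes the argument exactly as before. A minor verification to carry out is that after removing at most two vertices from the $K_{2,t+1}$ part of $H$ (namely $a_i$ and possibly the minor vertex incident to $e$), the surviving $t$ minor vertices still witness the required $K^{(3)}_{2,t}$; this is immediate because $H$ is missing only one edge.
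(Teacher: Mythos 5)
Your proof is correct and follows exactly the route the paper intends: the paper omits the proof of this lemma, citing its similarity to the Distraction Lemma (\Cref{lem:K24+:EndPosition}), and your argument is precisely that adaptation. You also correctly isolate and handle the one non-routine point, namely that $G_t$ has genuine degree-one (pendant) vertices, so the blocking vertex $a_i$ may play either a pendant or a deprived-minor role in a putative threat graph; in both cases the surviving edges yield a $K^{(3)}_{2,t}$ with center and main vertex $\{c,x\}$, contradicting $\mathit{(ii)}$ or $\mathit{(iii)}$ just as in the $\hat{K}_{2,4}^{(3)}$ case.
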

We continue with proving the main theorem of this section.
\begin{theorem}
    \label{thm:K2t^3}
    $\R(K_{\aleph_0}^{(3)}, G_t)$ is a draw for all $t \geq 3$.
\end{theorem}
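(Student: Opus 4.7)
The plan is to adapt the proof of \Cref{thm:K24+} to the target $G_t$, using \Cref{lem:K2t:EndPosition} in place of \Cref{lem:K24+:EndPosition}. Throughout, $P_2$'s intermediate goal is to build a copy of $K^{(3)}_{2,t}(t-2)$ satisfying the hypotheses of \Cref{lem:K2t:EndPosition}.

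I would have $P_2$ open with $2t-1$ moves of the form $abv_i$ for fresh vertices $a,b,v_1,\ldots,v_{2t-1}$, mirroring the five opening moves of the proof of \Cref{thm:K24+} (which is the case $t=3$). Let $T_1$ denote the point after $P_1$'s $(2t)$-th move, and for $v \in \{a,b\}$ define
\[
F_v \coloneqq \{v_i \in \{v_1, \ldots, v_{2t-1}\} \ | \ \forall j \neq i \colon vv_iv_j \notin E_{T_1}(P_1)\}.
\]
A counting argument similar to the one in the proof of \Cref{thm:K24+} splits the analysis into Case 1, in which one of $|F_a|, |F_b|$ exceeds an appropriate threshold, and Case 2, in which both are below the threshold and $P_1$'s edges on the $a$- and $b$-boards are constrained to a restrictive form analogous to the Case~2 configuration of the proof of \Cref{thm:K24+}. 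In Case 1, fix $c \in \{a,b\}$ with $|F_c|$ large, let $x$ be the other, designate a vertex $v_M \in F_c$ as the future second main, and have $P_2$ play $cv_Mv_i$ for $t$ carefully chosen $v_i$, at each step selecting $v_i$ using a rule analogous to the minimal/maximal $P_1$-degree rules used in the proof of \Cref{thm:K24+}. This yields a $K^{(3)}_{2,t}(t-2)$ with center $c$, mains $x$ and $v_M$, $t$ minors, and $t-2$ pendants. As in \Cref{thm:K24+}, I split further on $e^{P_1}(G_t)$ just before $P_2$'s completing move: in the good sub-case $P_2$ finishes the structure and invokes an analog of \Cref{cor:K24+:EndPosition}; in the bad sub-case $P_1$ has built a copy of $G_t - e$ for some edge $e$, and $P_2$ enters a blocking phase mirroring \emph{Cases~1.2 and 1.3} of the proof of \Cref{thm:K24+}, blocking at $P_1$'s near-copy until she is forced to play outside it, then pivoting back to the distraction construction. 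Case 2 is handled as in \Cref{thm:K24+}: the restrictive edge form of $P_1$ forces $e^{P_1}_{T_1}(G_t)$ to be very small, and $P_2$ can complete a $K^{(3)}_{2,t}(t-2)$ in a constant number of additional moves.

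The main obstacle will be generalizing the counting arguments of \emph{Cases~1.3 and 1.4} of \Cref{thm:K24+} to arbitrary $t$. Concretely, for each candidate center and each candidate main vertex of a hypothetical $P_1$-threat graph, one must verify that the $P_1$-degree required at that vertex would exceed what the opening-move budget of $2t$, together with $P_2$'s degree-maximality rule, allows. A final ingredient is an analog of \Cref{cor:K24+:EndPosition} for $G_t$: if $P_2$ has built a $K^{(3)}_{2,t}(t-2)$ and $P_1$ has at most $N(t)$ edges with no threat, then conditions (ii) and (iii) of \Cref{lem:K2t:EndPosition} are automatically satisfied by a degree-counting argument at the main vertices analogous to the proof of \Cref{cor:K24+:EndPosition}.
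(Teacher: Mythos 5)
Your outline transplants the $F_a/F_b$ dichotomy and the reactive ``build a threat graph, block $P_1$'s near-copy until she leaves it'' endgame from the proof of \Cref{thm:K24+}. This is not the route the paper takes for $G_t$ (the paper's proof instead splits on whether $e^{P_1}_{T_2}(G_t)$ is maximal, uses a \emph{fresh} vertex $z$ as the second main vertex of $P_2$'s structure, and in the critical case runs a three-part pairing algorithm on an explicit odd-sized set $A$ of completion edges), and the divergence is not cosmetic: your endgame plan breaks on a structural feature of $G_t$ that is absent from $\hat{K}_{2,4}$. In $\hat{K}_{2,4}$ the ninth edge is the main edge, so a copy of $K_{2,4}^{(3)}$ claimed by $P_1$ is a threat with a \emph{unique} completing edge, which $P_2$ can block (this is what Cases 1.2 and 1.3 of \Cref{thm:K24+} exploit). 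In $G_t=K^{(3)}_{2,t+1}(t-2)$ the edges beyond the $K_{2,t+1}^{(3)}$ are pendant edges at the large main vertex, and a missing pendant edge can be completed through \emph{any} unused vertex of the infinite board. So once $P_1$ owns a clean $K_{2,t+1}^{(3)}$, she wins in $t-2$ further moves no matter what $P_2$ does; there is no ``blocking phase.'' A correct strategy must therefore prevent $P_1$ from ever assembling the $K_{2,t+1}^{(3)}$ core, which is exactly what the paper's set $A$ (of size $2(t-s_1)+1$; $P_2$ takes one element to make it even and then pairs) together with parts $\mathit{(i)}$ and $\mathit{(ii)}$ of its algorithm achieves: $P_1$ can connect at most $t$ vertices to both main vertices on the $a_1$-board. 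This pairing idea is the heart of the argument and is missing from your proposal.

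Two further points. First, choosing the second main vertex $v_M$ among the $v_i$ makes the completion race exactly tight: $P_2$ needs $t$ spokes $cv_Mv_j$ out of $2t-2$ candidates while $P_1$ interleaves $t-1$ blocking moves, so even with the two-hub relabelling trick the count closes with no slack, and in your Case 2 (where $|F_a|=|F_b|=1$ and only one hub is available) the count fails by one; your claim that $P_2$ ``can complete a $K^{(3)}_{2,t}(t-2)$ in a constant number of additional moves'' is not justified. The paper avoids all of this by taking the second main vertex to be a fresh vertex $z$, which also makes conditions $\mathit{(ii)}$ and $\mathit{(iii)}$ of \Cref{lem:K2t:EndPosition} nearly automatic. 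Second, the analogue of \Cref{cor:K24+:EndPosition} you invoke would need the same care: with $v_M$ non-fresh, $P_1$'s degree at $v_M$ can be large, so the degree-counting verification of $\mathit{(ii)}$ and $\mathit{(iii)}$ does not come for free.
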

\begin{proof}
    Fix $t \geq 3$ and let $B$ be the infinite complete 3-uniform board $K_{\aleph_0}^{(3)}$. Let $a_1a_2a_3$ be the first edge claimed by $P_1$. In his first $2t-1$ moves, $P_2$ selects edges of the form $x_1x_2y_i$ for $i \in [2t-1]$, such that $x_1$, $x_2$ and all $y_i$ are chosen fresh at that time. Let $T_1$ be the point in time after $P_1$ has taken $2t$ edges.

    If $e^{P_1}_{T_1}(G_t) = 2t$, fix a copy $H \subseteq B$ of $G_t$ with center and main vertices $c, m_1, m_2 \in B$, such that $E_{T_1}(P_1) \subseteq E(H)$. Observe that the center and the main vertices of $H$ are uniquely determined, i.e.~there exists no other copy at $T_1$ with a center other than $c$ and main vertices other than $m_1$ and $m_2$, such that $E_{T_1}(P_1) \subseteq E(H)$. Moreover, we can assume without loss of generality, that $c = a_1$ and $m_1 = a_2$ in that case.

    We choose $x_C \in \{x_1, x_2\}$ as follows. If $e^{P_1}_{T_1}(G_t) = 2t$ and $m_2 = x_1$ or $m_2 = x_2$, we choose $x_C = x_2$ or $x_C = x_1$, respectively and therefore guarantee that $x_C$ is not a main vertex of $H$. In all other cases, we choose $x_C \in \{x_1, x_2\}$, such that $d^{P_1}_{T_1}(x_C)$ is minimal. \linebreak
    $x_C$ will be the center of the $K^{(3)}_{2,t}(t-2)$, which $P_2$ will build in order to apply the Distraction Lemma (\ref{lem:K2t:EndPosition}). We will denote the other vertex in $\{x_1, x_2\}$ with $x_M$ and it will become a main vertex of the $K^{(3)}_{2,t}(t-2)$.
    Note that, the only case in which we do not choose $d^{P_1}_{T_1}(x_C)$ to be minimal is, if $P_1$ has claimed a $(2t-1)$-star on the $a_1$-board with $x_C$ as a leaf together with $a_1x_Cx_M$. $P_2$ now picks $x_Czy_1$ for a fresh vertex $z$. Let $T_2$ be the point in time after $P_1$ has made $2t+1$ moves.
    \begin{figure}[H]
        \centering
        \begin{subfigure}[t]{0.42\textwidth}
            \centering
            \includegraphics[height= 43mm, keepaspectratio]{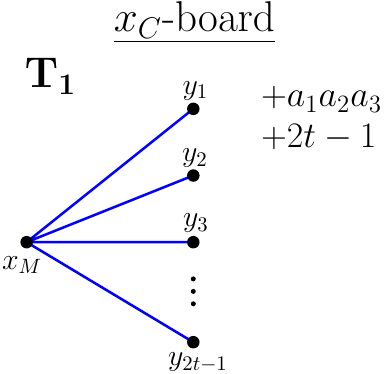}
            \caption{The $x_C$-board after $2t$ moves of $P_1$.}
            \label{pic:K2t:T1}
        \end{subfigure}
        \hspace{0.08\textwidth}
        \begin{subfigure}[t]{0.44\textwidth}
            \centering
            \includegraphics[height= 43mm, keepaspectratio]{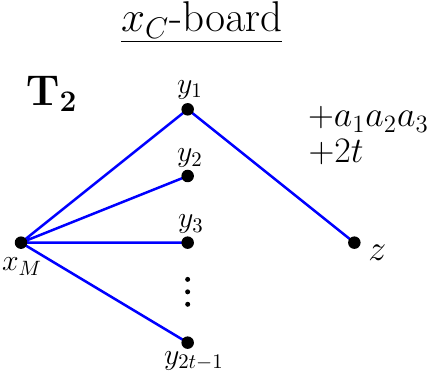}
            \caption{The $x_C$-board after $2t+1$ moves of $P_1$.}
            \label{pic:K2t:T2}
        \end{subfigure}
        \caption{}
    \end{figure}
    We obtain the following two cases.

    \underline{Case 1}: $e^{P_1}_{T_2}(G_t) < 2t+1$  \newline
    \underline{Case 2}: $e^{P_1}_{T_2}(G_t) = 2t+1$ \newline
    We will show that in either case, $P_2$ has a drawing strategy.
    \begin{proof}[Proof of Case 1]
        In his next $t-1$ moves, $P_2$ selects $t-1$ edges of the form $x_Czy_i$ for $i \in [2t-1]$, which is possible due to the pigeonhole principle. Up to relabeling, say $P_2$ has taken $x_Czy_1, \dots, x_Czy_t$ and let $T_3$ be the point in time after $P_1$ has made $3t$ moves. If $P_2$ cannot win immediately after $3t$ moves, $P_1$ must have claimed $x_Czy_{t+1}, \dots, x_Czy_{2t-1}$, since $e^{P_1}_{T_3}(G_t) < 3t$. We verify that the conditions of the Distraction Lemma (\ref{lem:K2t:EndPosition}) hold.

        \begin{figure}[H]
            \centering
            \includegraphics[width=0.30\textwidth]{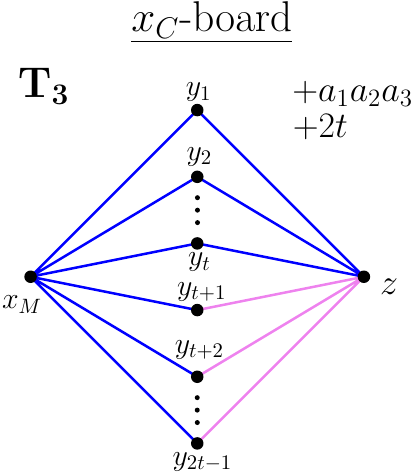}
            \caption{The $x_C$-board after $3t$ moves by $P_1$.}
            \label{pic:K2t:T3}
        \end{figure}
        If $P_1$ had a threat graph $H$ at $T_3$, then $t-1$ of the $t$ edges specified in \Cref{pic:K2t:T3} must be contained in $H$. Since $\{a_1,a_2,a_3\} \cap \{x_C, z, y_i\} = \emptyset$ for all $i \in \{t+1, \dots, 2t-1\}$, in fact all such edges $x_Czy_i$ must be contained in $H$. By choice of $z$, $x_C$ must be the center of $H$. Therefore, $d_{T_3}^{P_1}(x_C) = 3t-1$ (it cannot be larger, because $x_C \notin \{a_1, a_2, a_3\}$) and $d_{T_1}^{P_1}(x_C) = 2t-1$. By choice of $x_C$, we have $d_{T_1}^{P_1}(x_M) = 2t-1$, so $P_1$ must have claimed $2t-1$ edges of the form $x_Cx_Mv$ for $v \notin \{y_1, \dots, y_{2t-1}\}$ at $T_1$. Since $|V(G_t)| = 2t+2$, but the edges $P_1$ has claimed cover at least $3t+1$ vertices of $B$, it is clear that there cannot be a threat. In order to verify (ii) and (iii) of the Distraction Lemma (\ref{lem:K2t:EndPosition}), observe that $z$ cannot be a center of a $K_{2,t}^{(3)}$ (with main vertex $x_C$) claimed by $P_1$, because it was chosen fresh in move $2t$ by $P_2$.

        Suppose that $P_1$ has claimed a copy $H$ of $K_{2,t}^{(3)}$ at $T_3$ with center $x_C$ and main vertices $z, z_M$ for some vertex $z_M \in V(B)$. We have $x_Czy_{t+1}, \dots, x_Czy_{2t-1} \in E(H)$, since $d_{T_3}^{P_1}(z) \leq t$ and hence $x_M \neq z_M$. Let $y^*$ be the other minor vertex of $H$. By choice of $z$, we know that in her last $t$ moves, $P_1$ must have picked $x_Czy_{t+1}, \dots, x_Czy_{2t-1}, x_Czy^*$. Therefore, the edges $x_Cz_My_{t+1}, \dots, x_Cz_My_{2t-1}, x_Cz_My^*$ had already been claimed by $P_1$ at $T_1$, so $d^{P_1}_{T_1}(x_C) \geq t$. By choice of $x_C$, we have $d^{P_1}_{T_1}(x_M) \geq t$. This implies $y^* = x_M$ and the $t-1$ edges not specified at $T_1$ must contain $x_M$, but cannot contain $x_C$. Then, there can be at most two edges at $T_3$ (namely $x_Cz_My^*$ and $x_Czy^*$), which are incident to both $x_C$ and $x_M$. Applying the Distraction Lemma (\ref{lem:K2t:EndPosition}) with $x_C$ and $x_M$ as main vertex and center provides us a drawing strategy for $P_2$.
    \end{proof}
    \begin{proof}[Proof of Case 2]
        Let $H$ be a copy of a subgraph of $G_t$ with $2t+1$ edges, center $a_1$ and main vertices $a_2, a_M$, which $P_1$ has claimed at $T_2$. For convenience, let
        \vspace{10mm}
        \begin{align*}
            E(H) & = \{a_1a_2v_i, a_1a_Mv_i \ | \ i \in \{1, \dots, s_1\}\}                \\
                 & \cup  \{a_1a_2v_i \ | \ i \in \{s_1 + 1, \dots, s_1 + s'\}\}            \\
                 & \cup  \{a_1a_Mv_i \ | \ i \in \{s_1 + s' + 1, \dots, s_1 + s' +s''\}\}.
        \end{align*}
        Define $s_2 = s' + s''$. Since $2s_1 + s_2 = 2t+1$, we have that $s_2$ is odd and $s_1 \leq t$. Let
        \begin{equation*}
            A = \{a_1a_Mv_i \ | \ i \in \{s_1 + 1, \dots, s_1 + s'\}\} \cup  \{a_1a_2v_i \ | \ i \in \{s_1 + s' + 1, \dots, s_1 + s' +s''\}\}.
        \end{equation*}
        \begin{figure}[H]
            \centering
            \includegraphics[width=0.30\textwidth]{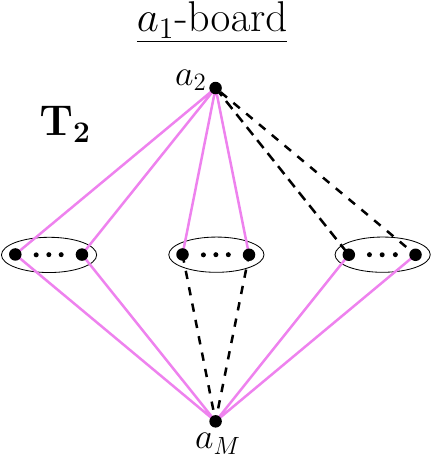}
            \caption{The $a_1$-board from the perspective of $P_1$ at $T_2$. The left blob contains the vertices $v_1, \dots, v_{s_1}$, the middle contains $v_{s_1 + 1}, \dots, v_{s_1 + s'}$ and the right contains $v_{s_1 + s'+ 1}, \dots, v_{s_1 + s' + s''}$. The dashed edges are the ones in $A$, which are claimed by neither player yet.}
            \label{pic:K2t:T2P1perspective}
        \end{figure}
        In his $(2t + 1)^\text{st}$ move, $P_2$ picks any edge from $A$. As long as he can, $P_2$ plays according to the following algorithm. We will prove later that the first time $P_2$ cannot play according to the algorithm, the Distraction Lemma (\ref{lem:K2t:EndPosition}) is applicable and he can force at least a draw.
        \begin{enumerate}
            \item Whenever $P_1$ picks an unclaimed edge of $A$, $P_2$ picks any other unclaimed edge of $A$. (This is always possible, since $A$ is even after the $(2t + 1)^\text{st}$ move of $P_2$.)
            \item If $P_1$ claims $a_1a_2v$ or $a_1a_Mv$ for $v \notin \{a_2,a_M\}$ and $a_1a_2v, a_1a_Mv \notin A$, $P_2$ chooses $a_1a_Mv$ or $a_1a_2v$ respectively.
            \item Otherwise, $P_2$ picks an unclaimed edge $x_Czy_i$ for $i \in [2t-1]$ as long as possible.
        \end{enumerate}
        Assume the game is played up to $T_4$, the point in time when $P_1$ has either won or it is $P_2$'s move and he cannot play according to the algorithm. In the latter case, all edges $x_Czy_i$ with $i \in [2t-1]$ must have been claimed at $T_4$. Let
        \begin{align*}
            E' \coloneqq E_{T_4}(P_1) \cap (\{a_1a_2v' \ | \ v' \in V(B)\backslash\{a_M\}\} \cup  \{a_1a_Mv'' \ | \ v'' \in V(B)\backslash\{a_2\}\})
        \end{align*}
        and $E_3 = E_{T_4}(P_1) \backslash E'$. Note that $E_{T_2}(P_1) \subseteq E'$. Moreover, after $P_1$ has claimed an edge $e \in E_3$, $P_2$ reacted according to (iii) in his next move, if possible. Hence, $|E_3| \leq t$ or $P_2$ has won by claiming $t+1$ edges of the form $x_Czy_i$ with $i \in [2t-1]$.

        Assume for a contradiction that $P_1$ won by building a copy $H$ of $G_t$ with center $c^*$ and main vertices $m_1^*, m_2^*$, such that $d^{P_1}_{T_4}(m_1^*) \geq 2t-1$. We prove the following claim.

        \underline{Claim}: Either the center of $H$ is not $a_1$ or one of its main vertices is not $a_2$ or $a_M$.
        \begin{claimproof}
            Assume the contrary. By the calculations above, we obtain $|A| = 2(t-s_1)+1$. Due to part (i) of the algorithm, $P_1$ can connect at most $t-s_1$ vertices from $v_{s_1+1}, \dots, v_{s_1+s_2}$ to both $a_2$ and $a_M$.  By (ii), she can connect any other vertex to at most one of $a_2, a_M$ on the $a_1$-board. So $P_1$ can obtain at most a $K_{2,t}$ on the $a_1$-board and hence cannot even claim a $K_{2,t+1}^{(3)}$ with center $a_1$ and main vertices $a_2$ and $a_M$.
        \end{claimproof}
        Which vertex can $c^*$ possibly be? We know that $d_{T_4}^{P_1}(c^*) \geq 3t$, hence the options for $c^*$ are $a_1, a_2$ and $a_M$. Both the $a_2$-board and the $a_M$-board contain an arbitrarily large star centered at $a_1$ and potentially edges from $E_3$. But since $|E_3| \leq t$, this is not enough to build a copy of $G_t$, so $c^* = a_1$. Again by a degree argument, we have $m_1^* = a_2$ without loss of generality. ($m_1^* = a_M$ is almost the same.) Since $m_2^* \neq a_M$ by the claim above, we have $|\{e \in E' \cap E(H) \ | \ m_2^* \in e\}| \leq 2$.
        Due to $|E_3| \leq t$ and $d_{T_4}^{P_1}(m_2^*) \geq t+1$, we obtain that $|\{e \in E' \cap E(H) \ | \ m_2^* \in e\}| \in \{1,2\}$, so one of $a_1a_2m_2^*, a_1a_Mm_2^*$ must be contained in $E(H)$. 
        Since $c^* m_1^*m_2^* \notin E(H)$, we obtain $a_1a_Mm_2^* \in E_{T_4}(P_1) \cap E(H)$. Moreover, all edges of $E_3$ must contain $a_1$ and $m_2^*$, because $|\{e \in E_{T_4}(P_1) \cap E(H) \ | \ a_1, m_2^* \in e\}| \leq t$, otherwise. In this case, $a_1a_Mm_2^* \notin E(H)$, since $a_1a_2a_M \notin  E_{T_4}(P_1)$ contradicting our assumption.

        So it must be $P_2$'s move and he cannot play according to the algorithm, since all edges $x_Czy_i$ with $i \in [2t-1]$ have been claimed. By the pigeonhole principle, $P_2$ has claimed the edges $x_Czy_2, \dots, x_Czy_t$ without loss of generality and if he cannot win at $T_4$, then $P_1$ must have claimed $x_Czy_{t+1}, \dots, x_Czy_{2t-1}$ in the meantime. Since $P_2$ has claimed a $K^{(3)}_{2,t}(t-2)$ and it is his move, we check the conditions of the Distraction Lemma (\ref{lem:K2t:EndPosition}).
        \begin{figure}[H]
            \centering
            \includegraphics[width=0.70\textwidth]{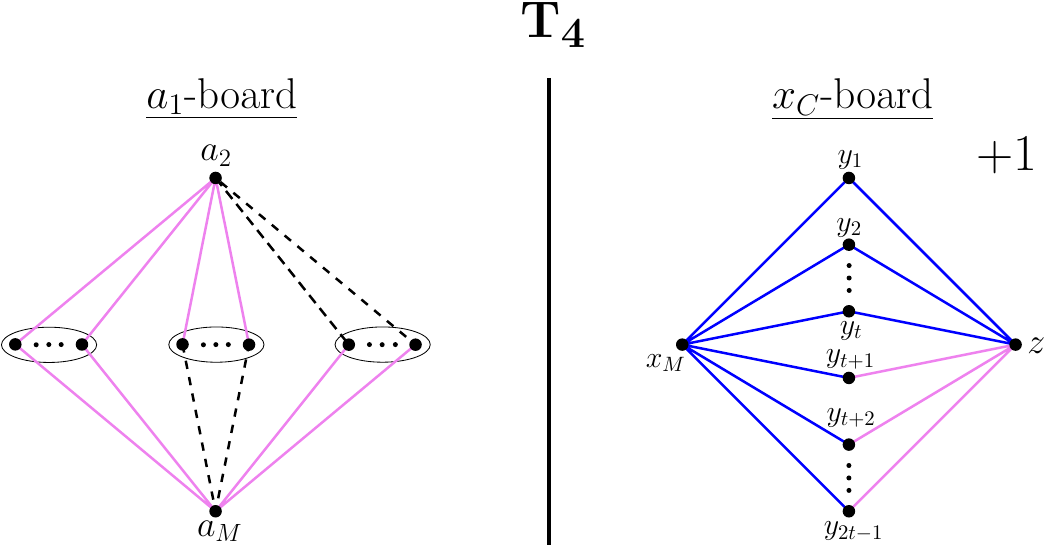}
            \caption{The actual board state at $T_4$ with the $a_1$-board on the left and the $x_C$-board on the right. Moreover, $P_1$ has claimed at most $(t-s_1)$ edges of $A$ (the dashed edges as in \Cref{pic:K2t:T2P1perspective}). Since $|E_3| = t$, note that all $P_1$-edges except one are displayed either in the left part or the right part of the figure.}
            \label{pic:K2t:T4}
        \end{figure}

        Since $e^{P_1}_{T_2}(G_t) = 2t+1$, we had $e^{P_1}_{T_1}(G_t) = 2t$. Moreover, $a_M$ has been chosen by $P_1$ at latest in her $(2t)^\text{th}$ move, so by choice of $x_C$, we have $x_C \notin \{a_1, a_2, a_M\}$. Since $z$ was chosen in move $2t$, we also have $z \notin \{a_1, a_2, a_M\}$. So $d^{P_1}_{T_4}(x_C), d^{P_1}_{T_4}(z) \leq t+2$ and since $t \geq 3$, there cannot be a $K_{2,t}^{(3)}$ claimed by $P_1$, with center $x_C$ or $z$ and hence (ii) and (iii) of the Distraction Lemma (\ref{lem:K2t:EndPosition}) hold.

        If $P_1$ had a threat graph $H$ at $T_4$, then its center must be contained in at least $3t-1$ edges. By the same degree argument as above, the only possible choices for its center are $a_1, a_2, a_M$. The $a_2$-board contains a $P_1$-star centered at $a_1$ and at most one additional $P_1$-edge, since $x_Czy_{t+1}, \dots, x_Czy_{2t-1} \in E_3$. Since it could be that $a_M = y_i$ for some $y_i$, the \linebreak $a_M$-board contains at most a $P_1$-star and two additional $P_1$-edges, so both $a_2$ and $a_M$ cannot be the centers of $H$. On the $a_1$-board, only $a_2$ and $a_M$ have degree at least 4 at $T_4$, so if $t \geq 4$, only $a_2$ and $a_M$ could be the main vertices of $H$. By definition of $T_4$ the last edge $P_1$ claimed before $T_4$ must be contained in $H$, but cannot be of the form $a_1a_2v$ or $a_1a_Mv$ for some $v$, because in that case $P_2$ reacted with part (i) or (ii) of the algorithm. Therefore, $P_1$ cannot have a threat graph at $T_4$, if $t \geq 4$.

        If $t=3$, the only type of vertex $v \notin \{a_2, a_M\}$, which satisfies $d^{P_1}_{T_4}(v) = 3$ on the $a_1$-board is connected with $a_2$ and $a_M$ and contained in the non-specified edge from $E_3$. Since either $a_2$ or $a_M$ must be the other main vertex of $H$, $a_1a_2v \notin E(H)$ or $a_1a_Mv \notin E(H)$ respectively. Therefore, $|E(H) \cap \{e \in E_{T_4}(P_1) \ | \ v \in e \}| \leq 2$ contradicting that $v$ was a main vertex of $H$.
    \end{proof}
    This shows that $P_2$ has also a drawing strategy in \textit{Case 2}, which completes the proof.
\end{proof}

\printbibliography

\end{document}